\newtheorem{theorem}{Theorem}
\theoremstyle{plain}
\newtheorem{corollary}{Corollary}
\newtheorem{lemma}{Lemma}
\newtheorem{remark}{Remark}
\numberwithin{equation}{section}
\begin{document}

\title[New Inequalities of Steffensen's type]{New Inequalities of Steffensen's type for $s$--convex functions}

\author[M.W. Alomari]{Mohammad W. Alomari}
\address{Department of Mathematics,
Faculty of Science, Jerash University, 26150 Jerash, Jordan}
\email{mwomath@gmail.com}


\subjclass[2000]{26D10, 26D15}

\keywords{Steffensen inequality, Hayashi's inequality, $s$--Convex
function.}

\begin{abstract}
In this work, new inequalities connected with the Steffensen's
integral inequality for $s$-convex functions are proved.
\end{abstract}

\maketitle

\section{Introduction}
In order to study certain inequalities between mean values, J.F.
Steffensen \cite{RefK} has proved the following inequality (see
also \cite{RefI}, p. 311):
\begin{theorem}
Let $f$ and $g$ be two integrable functions defined on $(a,b)$,
$f$ is decreasing and for each $t \in (a,b)$, $0 \le g(t) \le 1$.
Then, the following inequality
\begin{eqnarray}
\label{eq1}\int\limits_{b - \lambda }^b {f\left( t \right)dt}  \le
\int\limits_a^b {f\left( t \right)g\left( t \right)dt} \le
\int\limits_a^{a + \lambda } {f\left( t \right)dt}
\end{eqnarray}
holds, where, $\lambda  = \int\limits_a^b {g\left( t \right)dt}$.
\end{theorem}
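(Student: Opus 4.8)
The plan is to prove the two inequalities in \eqref{eq1} separately, each by a single splitting-and-comparison argument: for the upper bound everything is organized around the point $a+\lambda$, and for the lower bound around the point $b-\lambda$. First I would record the elementary observation that the hypothesis $0\le g(t)\le 1$ forces $0\le\lambda=\int_a^b g\le b-a$, so that both $a+\lambda$ and $b-\lambda$ lie in $[a,b]$ and all the integrals appearing in the statement are well defined.

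For the right-hand inequality I would rewrite the relevant difference, using only $\int_a^b g=\lambda$, as
$$\int_a^{a+\lambda} f(t)\,dt-\int_a^b f(t)g(t)\,dt=\int_a^{a+\lambda} f(t)\bigl(1-g(t)\bigr)\,dt-\int_{a+\lambda}^b f(t)g(t)\,dt .$$
The key bookkeeping identity is $\int_a^{a+\lambda}\bigl(1-g\bigr)=\int_{a+\lambda}^b g$, which again is just $\lambda=\int_a^b g$ rearranged. Since $f$ is decreasing, on $[a,a+\lambda]$ we have $f(t)\ge f(a+\lambda)$ with $1-g\ge 0$, and on $[a+\lambda,b]$ we have $f(t)\le f(a+\lambda)$ with $g\ge 0$; bounding each integrand by the constant $f(a+\lambda)$ and using the identity gives
$$\int_a^{a+\lambda} f\bigl(1-g\bigr)-\int_{a+\lambda}^b fg\ \ge\ f(a+\lambda)\left(\int_a^{a+\lambda}\bigl(1-g\bigr)-\int_{a+\lambda}^b g\right)=0 .$$

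The left-hand inequality follows from the mirror computation centered at $b-\lambda$: one writes $\int_a^b fg-\int_{b-\lambda}^b f=\int_a^{b-\lambda} fg-\int_{b-\lambda}^b f\bigl(1-g\bigr)$, notes $\int_a^{b-\lambda} g=\int_{b-\lambda}^b\bigl(1-g\bigr)$, and then uses $f(t)\ge f(b-\lambda)$ on $[a,b-\lambda]$ together with $f(t)\le f(b-\lambda)$ on $[b-\lambda,b]$ and the sign conditions $g\ge 0$, $1-g\ge 0$ to conclude the difference is $\ge f(b-\lambda)\cdot 0=0$. I do not expect a genuine obstacle here; the only delicate point is the algebraic bookkeeping that makes the two ``leftover'' integrals cancel, i.e.\ verifying $\int_a^{a+\lambda}(1-g)=\int_{a+\lambda}^b g$ and its counterpart at $b-\lambda$, which is precisely where the definition $\lambda=\int_a^b g$ is used. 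I would also remark that no continuity of $f$ is needed, only monotonicity, so $f(a+\lambda)$ and $f(b-\lambda)$ may be interpreted as the appropriate one-sided limits if $f$ jumps there, without affecting the argument.
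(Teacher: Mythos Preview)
Your argument is correct and is in fact the classical proof of Steffensen's inequality: split at $a+\lambda$ (respectively $b-\lambda$), use the bookkeeping identity $\int_a^{a+\lambda}(1-g)=\int_{a+\lambda}^b g$ coming from $\lambda=\int_a^b g$, and then compare $f$ to the constant $f(a+\lambda)$ on each piece using monotonicity and the sign of the weights. Nothing is missing.

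Note, however, that the paper does not give a proof of this theorem at all: it is quoted in the introduction as Steffensen's original 1918 result and serves only as motivation for the new Steffensen-type estimates proved in Section~2. So there is no ``paper's own proof'' to compare against; your write-up simply supplies the standard argument that the paper takes for granted.
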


Some minor generalization of Steffensen's inequality (\ref{eq1})
was considered by T. Hayashi \cite{RefE}, using the substituting $
{{g\left( t \right)} \mathord{\left/
 {\vphantom {{g\left( t \right)} A}} \right.
 \kern-\nulldelimiterspace} A}$ for $g\left( t \right)$, where $A$ is positive constant.
For other result involving Steffensen type inequality, see
\cite{RefC}, \cite{RefE} and \cite{RefH}--\cite{RefK}.\\

 In the recent work \cite{RefA}, Alomari et. al. proved
the following result:
\begin{theorem}
\label{thm5}Let $f,g: [a,b] \to \mathbb{R}$ be integrable such
that $0 \le g(t) \le 1$, for all $t \in [a,b]$ such that $\int_a^b
{g\left( t \right)df\left( t \right)}$ exists. If $f$ is
absolutely continuous on $[a,b]$ with $f' \in L_p[a,b]$, $1 \le p
\le \infty$, then we have
\begin{multline}
\label{eq2.16}\left| {\int_a^{a + \lambda } {f\left( t \right)dt -
\int_a^b {f\left( t \right)g\left( t \right)dt} } } \right|
\\
\le \left\{
\begin{array}{l}
 \frac{1}{2}\left[ {\lambda ^2  + \left( {b - a - \lambda }
\right)^2 } \right] \left\| {f'} \right\|_{\infty ,\left[ {a,b}
\right]},\,\,\,\,\,\,\,\,\,\,\,\,\,\,\,\,\,\,\,\,\,\,\,\,\,\,\,\,\,if\,\,\,\,\,\,\,\,f' \in L_\infty  [a,b]; \\
  \\
 \frac{\left\| {f'} \right\|_{p ,\left[ {a,b}
\right]}}{{\left( {q + 1} \right)^{1/q} }} \left[ {\lambda
^{\left( {q + 1} \right)/q}  + \left( {b - a - \lambda }
\right)^{\left( {q + 1} \right)/q} } \right],\,\,\,\,\,\,\,\,if\,\,\,\,\,\,\,\,f' \in L_p [a,b],p > 1; \\
  \\
 \left[ \int_{a + \lambda }^b {g\left( t \right)dt}
\right]\left\| {f'} \right\|_1 ,\,\,\,\,\,\,\,\,\,\,\,\,\,\,\,\,\,\,\,\,\,\,\,\,\,\,\,\,\,\,\,\,\,\,\,\,\,\,\,\,\,\,\,\,\,\,\,\,\,\,\,\,\,\,\,\,\,\,\,\,\,if\,\,\,\,\,\,\,f' \in L_1 [a,b], \\
 \end{array} \right.
\end{multline}
and
\begin{multline}
\label{eq2.17}\left| {\int_a^b {f\left( t \right)g\left( t
\right)dt - \int_{b - \lambda }^b {f\left( t \right)dt} } }
\right|
\\
\le \left\{
\begin{array}{l}
 \frac{1}{2}\left[ {\lambda ^2  + \left( {b - a - \lambda }
\right)^2 } \right] \left\| {f'} \right\|_{\infty ,\left[ {a,b}
\right]},\,\,\,\,\,\,\,\,\,\,\,\,\,\,\,\,\,\,\,\,\,\,\,\,\,\,\,\,\,if\,\,\,\,\,\,\,\,f' \in L_\infty  [a,b]; \\
  \\
 \frac{\left\| {f'} \right\|_{p ,\left[ {a,b}
\right]}}{{\left( {q + 1} \right)^{1/q} }} \left[ {\lambda
^{\left( {q + 1} \right)/q}  + \left( {b - a - \lambda }
\right)^{\left( {q + 1} \right)/q} } \right],\,\,\,\,\,\,\,\,if\,\,\,\,\,\,\,\,f' \in L_p [a,b],p > 1; \\
  \\
 \left[ \int_a^{b - \lambda } {g\left( t \right)dt}
\right]\left\| {f'} \right\|_1 ,\,\,\,\,\,\,\,\,\,\,\,\,\,\,\,\,\,\,\,\,\,\,\,\,\,\,\,\,\,\,\,\,\,\,\,\,\,\,\,\,\,\,\,\,\,\,\,\,\,\,\,\,\,\,\,\,\,\,\,\,if\,\,\,\,\,\,\,f' \in L_1 [a,b]. \\
 \end{array} \right.
\end{multline}
\end{theorem}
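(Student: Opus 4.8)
The plan is to reduce each of the two left-hand sides to a single integral of the form $\int_a^b K(t)f'(t)\,dt$, where $K$ is a nonnegative kernel built from $g$ that vanishes at $a$ and at $b$, and then to estimate that integral by Hölder's inequality in each of the three ranges of $p$. For \eqref{eq2.16}, observe that $\int_a^{a+\lambda}f(t)\,dt=\int_a^b f(t)\chi_{[a,a+\lambda]}(t)\,dt$, so
\[
\int_a^{a+\lambda}f(t)\,dt-\int_a^b f(t)g(t)\,dt=\int_a^b f(t)\bigl[\chi_{[a,a+\lambda]}(t)-g(t)\bigr]\,dt.
\]
Put $\Phi(t)=\int_a^t\bigl[\chi_{[a,a+\lambda]}(s)-g(s)\bigr]\,ds$. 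Since $\int_a^b g(t)\,dt=\lambda$ we have $\Phi(a)=\Phi(b)=0$, and $\Phi$ is absolutely continuous with $\Phi'=\chi_{[a,a+\lambda]}-g$ a.e.; as $f$ is absolutely continuous, integration by parts gives
\[
\int_a^{a+\lambda}f(t)\,dt-\int_a^b f(t)g(t)\,dt=-\int_a^b\Phi(t)f'(t)\,dt.
\]

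The core of the argument is to describe $\Phi$ precisely. On $[a,a+\lambda]$ one has $\Phi(t)=\int_a^t(1-g)\ge0$ and, since $0\le g\le1$, also $\Phi(t)\le t-a$; moreover $\Phi'=1-g\ge0$, so $\Phi$ is nondecreasing there. On $[a+\lambda,b]$ a short computation gives $\Phi(t)=\int_t^b g$, hence $0\le\Phi(t)\le b-t$ and $\Phi'=-g\le0$, so $\Phi$ is nonincreasing there. Therefore $\Phi\ge0$ throughout $[a,b]$ and $\|\Phi\|_{\infty,[a,b]}=\Phi(a+\lambda)=\lambda-\int_a^{a+\lambda}g(t)\,dt=\int_{a+\lambda}^b g(t)\,dt$.

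Now estimate $\bigl|\int_a^b\Phi f'\bigr|$. If $f'\in L_\infty[a,b]$, factor out $\|f'\|_{\infty,[a,b]}$ and use $\int_a^{a+\lambda}\Phi(t)\,dt\le\int_a^{a+\lambda}(t-a)\,dt=\tfrac{1}{2}\lambda^2$ together with $\int_{a+\lambda}^b\Phi(t)\,dt\le\int_{a+\lambda}^b(b-t)\,dt=\tfrac{1}{2}(b-a-\lambda)^2$; this gives the first branch. If $f'\in L_p[a,b]$ with $p>1$, split $\int_a^b\Phi f'$ at $a+\lambda$ and apply Hölder on each piece, using $\bigl(\int_a^{a+\lambda}(t-a)^q\,dt\bigr)^{1/q}=\lambda^{(q+1)/q}/(q+1)^{1/q}$ and the analogous identity on $[a+\lambda,b]$, along with $\|f'\|_{p,[a,a+\lambda]},\|f'\|_{p,[a+\lambda,b]}\le\|f'\|_{p,[a,b]}$; this gives the second branch (applying Hölder directly to $\int_a^b\Phi f'$ would even give the sharper $\tfrac{\|f'\|_{p,[a,b]}}{(q+1)^{1/q}}\bigl[\lambda^{q+1}+(b-a-\lambda)^{q+1}\bigr]^{1/q}$, but the stated form suffices). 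If $f'\in L_1[a,b]$, bound $\bigl|\int_a^b\Phi f'\bigr|\le\|\Phi\|_{\infty,[a,b]}\|f'\|_1=\bigl[\int_{a+\lambda}^b g(t)\,dt\bigr]\|f'\|_1$, which is the third branch.

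Inequality \eqref{eq2.17} follows the same scheme with $\chi_{[a,a+\lambda]}$ replaced by $\chi_{[b-\lambda,b]}$: the kernel $\Psi(t)=\int_a^t\bigl[g(s)-\chi_{[b-\lambda,b]}(s)\bigr]\,ds$ again satisfies $\Psi(a)=\Psi(b)=0$, $\Psi\ge0$, $\Psi(t)\le t-a$ on $[a,b-\lambda]$, $\Psi(t)\le b-t$ on $[b-\lambda,b]$, and $\|\Psi\|_{\infty,[a,b]}=\Psi(b-\lambda)=\int_a^{b-\lambda}g(t)\,dt$, so the three estimates carry over with $a+\lambda$ replaced by $b-\lambda$ where appropriate. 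The only delicate point is the middle step — establishing the sign of the kernel, its two linear majorants, and the location and value of its maximum; once these are settled the rest is a routine application of Hölder's inequality.
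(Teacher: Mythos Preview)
Your argument is correct. Note, however, that the paper does not itself prove this theorem: it is quoted in the introduction from the authors' separate work~[RefA], so there is no in-paper proof to compare against directly. That said, your kernel $\Phi$ is exactly the two-piece kernel of the paper's Lemma~\ref{lemma} (with $\Phi(x)=\int_a^x(1-g)$ on $[a,a+\lambda]$ and $\Phi(x)=\int_x^b g$ on $[a+\lambda,b]$), and the paper uses precisely this representation as the starting point for all of its subsequent estimates; so your approach is the natural one and coincides with the methodology of both this paper and, presumably, [RefA]. The monotonicity/sign analysis of $\Phi$, the linear majorants $t-a$ and $b-t$, and the identification of $\|\Phi\|_\infty=\Phi(a+\lambda)=\int_{a+\lambda}^b g$ are all correct, and the three H\"older cases follow as you describe.
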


A function $f:\mathbb{R}^{+}\rightarrow \mathbb{R}$, where
$\mathbb{R}^{+}=\left[ {0,\infty }\right) $, is said to be
$s$-convex in the second sense if
\begin{align*}
f\left( {\alpha x+\beta y}\right) \leq \alpha ^{s}f\left( x\right) +{\beta }%
^{s}f\left( y\right)
\end{align*}
for all $x,y\in \left[ {0,\infty }\right) $, $\alpha ,\beta \geq 0$ with $%
\alpha +\beta =1$ and for some fixed $s\in \left( {0,1}\right] $.
This class of $s$-convex functions is usually denoted by
$K_{s}^{2}$, (see \cite{RefF}). It can be easily seen that for
$s=1$, $s$-convexity reduces to the ordinary convexity of
functions defined on $\left[ {0,\infty }\right) $.

In \cite{RefD}, Dragomir and Fitzpatrick proved a variant of
Hadamard's inequality which holds for $s$--convex functions in the
second sense:
\begin{align}
2^{s-1}f\left( {\frac{{a+b}}{2}}\right) \leq \frac{1}{{b-a}}\int_{a}^{b}{%
f\left( x\right) dx}\leq \frac{{f\left( a\right) +f\left( b\right)
}}{{s+1}}. \label{2}
\end{align}
The constant $k=\frac{1}{{s+1}}$ is the best possible in the
second inequality in (\ref{2}). For another inequalities of
Hermite--Hadamard type see \cite{RefB} and \cite{RefG}.

The aim of this paper is to establish new inequalities of
Steffensen's type for $s$-convex functions in the second sense.

\section{The results }

\begin{lemma}
\label{lemma} Let $f,g: [a,b] \subset \mathbb{R}^{+}\to
\mathbb{R}$ be integrable such that $0 \le g(t) \le 1$, for all $t
\in [a,b]$ and $\int_a^b {g\left( t \right)f'\left( t \right)dx}$
exists. Then we have the following representation
\begin{multline}
\label{eq2.1}\int_a^{a + \lambda } {f\left( t \right)dt}  -
\int_a^b {f\left( t \right)g\left( t \right)dt}
\\
= -  \int_a^{a + \lambda } {\left( {\int_a^x {\left( {1 - g\left(
t \right)} \right)dt} } \right)f'\left( x \right)dx} - \int_{a +
\lambda }^b {\left( {\int_x^b {g\left( t \right)dt} }
\right)f'\left( x \right)dx} ,
\end{multline}
and
\begin{multline}
\label{eq2.2} \int_a^b {f\left( t \right)g\left( t \right)dt -
\int_{b - \lambda }^b {f\left( t \right)dt} }
\\
=  - \int_a^{b - \lambda } {\left( {\int_a^x {g(t)dt} }
\right)f'\left( x \right)dx}  - \int_{b - \lambda }^b {\left(
{\int_x^b {\left( {1 - g\left( x \right)} \right)dt} }
\right)f'\left( x \right)dx},
\end{multline}
where $\lambda : = \int_a^b {g\left( t \right)dt}$.
\end{lemma}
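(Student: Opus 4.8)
The plan is to prove both identities by the same two-step recipe: first reduce each integral over $[a,b]$ to the two pieces cut out by the point $a+\lambda$ (respectively $b-\lambda$), and then integrate by parts on each piece, choosing the antiderivatives so that they vanish at the outer endpoint.

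For \eqref{eq2.1}, I would begin by writing $\int_a^b f(t)g(t)\,dt = \int_a^{a+\lambda} f(t)g(t)\,dt + \int_{a+\lambda}^b f(t)g(t)\,dt$, so that the left-hand side becomes $\int_a^{a+\lambda} f(t)\bigl(1-g(t)\bigr)\,dt - \int_{a+\lambda}^b f(t)g(t)\,dt$. On the first piece I integrate by parts using the antiderivative $x\mapsto \int_a^x\bigl(1-g(t)\bigr)\,dt$, which vanishes at $x=a$; on the second piece I use $x\mapsto -\int_x^b g(t)\,dt$, which vanishes at $x=b$. This yields precisely the two double integrals against $f'$ appearing on the right of \eqref{eq2.1}, together with two boundary terms at $x=a+\lambda$, namely $f(a+\lambda)\int_a^{a+\lambda}\bigl(1-g(t)\bigr)\,dt$ from the first piece and $-f(a+\lambda)\int_{a+\lambda}^b g(t)\,dt$ from the second.

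The crux is that these boundary terms cancel. Indeed $\int_a^{a+\lambda}\bigl(1-g(t)\bigr)\,dt = \lambda - \int_a^{a+\lambda} g(t)\,dt$, while $\int_{a+\lambda}^b g(t)\,dt = \int_a^b g(t)\,dt - \int_a^{a+\lambda} g(t)\,dt = \lambda - \int_a^{a+\lambda} g(t)\,dt$ by the definition $\lambda=\int_a^b g$; hence the two coefficients of $f(a+\lambda)$ agree and their difference is zero, leaving exactly \eqref{eq2.1}. Identity \eqref{eq2.2} follows in the same manner: split $\int_a^b f g = \int_a^{b-\lambda} f g + \int_{b-\lambda}^b f g$ so the left-hand side is $\int_a^{b-\lambda} f(t)g(t)\,dt - \int_{b-\lambda}^b f(t)\bigl(1-g(t)\bigr)\,dt$, integrate by parts on $[a,b-\lambda]$ with the antiderivative $x\mapsto\int_a^x g(t)\,dt$ and on $[b-\lambda,b]$ with $x\mapsto-\int_x^b\bigl(1-g(t)\bigr)\,dt$, and observe that the boundary terms at $x=b-\lambda$ cancel because $\int_a^{b-\lambda} g(t)\,dt = \lambda - \int_{b-\lambda}^b g(t)\,dt = \int_{b-\lambda}^b\bigl(1-g(t)\bigr)\,dt$.

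I do not expect a genuine analytic obstacle here; the only point needing care is the legitimacy of the integration by parts. Each inner integral ($\int_a^x(1-g)$, $\int_x^b g$, and their analogues) is absolutely continuous in $x$ with the indicated derivative a.e., and in the intended applications (as in Theorem~\ref{thm5}) $f$ is absolutely continuous, so the product rule and the fundamental theorem of calculus apply. If one prefers to avoid this, the same conclusion is reached by starting from the right-hand side and applying Fubini's theorem to each double integral: e.g. $\int_a^{a+\lambda}\Bigl(\int_a^x(1-g(t))\,dt\Bigr)f'(x)\,dx = \int_a^{a+\lambda}(1-g(t))\bigl(f(a+\lambda)-f(t)\bigr)\,dt$, and likewise for the other term, after which the boundary terms $f(a+\lambda)$ recombine and cancel exactly as above. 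Either route reduces the whole statement to the elementary identity for $\lambda$.
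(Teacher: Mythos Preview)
Your proof is correct and follows essentially the same approach as the paper: both arguments rest on integrating by parts the two pieces and observing that the boundary contributions at $a+\lambda$ cancel precisely because $\int_a^{a+\lambda}(1-g)=\int_{a+\lambda}^b g$ by the definition of $\lambda$. The only cosmetic difference is direction---the paper starts from the right-hand side and integrates by parts to recover the left, whereas you start from the left and produce the right---and your added remark on the Fubini alternative is a useful supplement.
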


\begin{proof}
Integrating by parts
\begin{align*}
&-  \int_a^{a + \lambda } {\left( {\int_a^x {\left( {1 - g\left( t
\right)} \right)dt} } \right)f'\left( x \right)dx} - \int_{a +
\lambda }^b {\left( {\int_x^b {g\left( t \right)dt} }
\right)f'\left( x \right)dx}
\\
&= - \left( {\int_a^{a + \lambda } {\left( {1 - g\left( t \right)}
\right)dt} } \right)f\left( {a + \lambda } \right) + \int_a^{a +
\lambda } {f\left( x \right)d\left( {\int_a^x {\left( {1 - g\left(
t \right)} \right)dt} } \right)}
\\
&\qquad + \left( {\int_{a + \lambda }^b {g\left( t \right)dt} }
\right)f\left( {a + \lambda } \right) + \int_{a + \lambda }^b
{f\left( x \right)d\left( {\int_x^b {g\left( t \right)dt} }
\right)}
\\
&=  - \left( {\int_a^{a + \lambda } {\left( {1 - g\left( t
\right)} \right)dt} } \right)f\left( {a + \lambda } \right) +
\int_a^{a + \lambda } {f\left( x \right)\left( {1 - g\left( x
\right)} \right)dx}
\\
&\qquad + \left( {\int_{a + \lambda }^b {g\left( t \right)dt} }
\right)f\left( {a + \lambda } \right) - \int_{a + \lambda }^b
{f\left( x \right)g\left( x \right)dx}
\\
&=  - \lambda f\left( {a + \lambda } \right) + f\left( {a +
\lambda } \right)\int_a^{a + \lambda } {g\left( t \right)dt}  +
\int_a^{a + \lambda } {f\left( x \right)dx}
\\
&\qquad - \int_a^{a + \lambda } {f\left( x \right)g\left( x
\right)dx} + f\left( {a + \lambda } \right)\int_{a + \lambda }^b
{g\left( t \right)dt}  - \int_{a + \lambda }^b {f\left( x
\right)g\left( x \right)dx}
\\
&=- \lambda f\left( {a + \lambda } \right) + f\left( {a + \lambda
} \right)\int_a^{a + \lambda } {g\left( t \right)dt}  + f\left( {a
+ \lambda } \right)\int_{a + \lambda }^b {g\left( t \right)dt}
\\
&\qquad + \int_a^{a + \lambda } {f\left( x \right)dx}  - \int_a^b
{f\left( x \right)g\left( x \right)dx}
\\
&=\int_a^{a + \lambda } {f\left( x \right)dx}  - \int_a^b {f\left(
x \right)g\left( x \right)dx},
\end{align*}
which gives the desired representation (\ref{eq2.1}). The identity
(\ref{eq2.2}) can be also proved in a similar way, we shall omit
the details.
\end{proof}

\subsection{Inequalities involving $s$-convexity}
In the following, inequalities for absolutely continuous functions
whose first derivatives are $s$-convex ($s$-concave) are given:
\begin{theorem}
\label{thm.main1}Let $f,g: [a,b]\subset \mathbb{R}^{+} \to
\mathbb{R}$ be integrable such that $0 \le g(t) \le 1$, for all $t
\in [a,b]$ such that $\int_a^b {g\left( t \right)f'\left( t
\right)dx}$ exists. If $f$ is absolutely continuous on $[a,b]$
such that $|f'|$ is $s$-convex on $[a,b]$, then
\begin{multline}
\label{eq2.3}\left| {\int_a^{a + \lambda } {f\left( t \right)dt -
\int_a^b {f\left( t \right)g\left( t \right)dt} } } \right|
\\
\le \frac{1}{{\left( {s + 1} \right)\left( {s + 2} \right)}}\left[
{\lambda ^{ 2} \left| {f'\left( a \right)} \right| + \left( {b - a
- \lambda } \right)^{2} \left| {f'\left( b \right)} \right|}
\right]
\\
+ \frac{1}{{s + 2}}\left[ {\lambda ^{ 2}  + \left( {b - a -
\lambda } \right)^{ 2} } \right]\left| {f'\left( {a + \lambda }
\right)} \right|,
\end{multline}
and
\begin{multline}
\label{eq2.4}\left| {\int_a^b {f\left( t \right)g\left( t
\right)dt}  - \int_{b - \lambda }^b {f\left( t \right)dt} }
\right|
\\
\le \frac{1}{{\left( {s + 1} \right)\left( {s + 2} \right)}}\left[
{\lambda ^{ 2} \left| {f'\left( b \right)} \right| + \left( {b - a
- \lambda } \right)^{2} \left| {f'\left( a \right)} \right|}
\right]
\\
+ \frac{1}{{s + 2}}\left[ {\lambda ^{ 2}  + \left( {b - a -
\lambda } \right)^{2} } \right]\left| {f'\left( {b - \lambda }
\right)} \right|,
\end{multline}
where, $\lambda : = \int_a^b {g\left( t \right)dt}$.
\end{theorem}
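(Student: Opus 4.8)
The plan is to derive both estimates from the integral representations in Lemma~\ref{lemma} by passing to absolute values and then combining the bound $0\le g\le1$ with the $s$-convexity of $|f'|$.

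For (\ref{eq2.3}), I would first apply the triangle inequality to (\ref{eq2.1}):
\[
\left|\int_a^{a+\lambda}f\,dt-\int_a^b fg\,dt\right|\le\int_a^{a+\lambda}\!\left(\int_a^x(1-g(t))\,dt\right)|f'(x)|\,dx+\int_{a+\lambda}^b\!\left(\int_x^b g(t)\,dt\right)|f'(x)|\,dx.
\]
Since $0\le g\le1$, the inner weights obey $\int_a^x(1-g)\,dt\le x-a$ and $\int_x^b g\,dt\le b-x$, which reduces the problem to bounding $\int_a^{a+\lambda}(x-a)|f'(x)|\,dx$ and $\int_{a+\lambda}^b(b-x)|f'(x)|\,dx$. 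Assuming for the moment $0<\lambda<b-a$, on $[a,a+\lambda]$ I would write $x$ as the convex combination $\frac{a+\lambda-x}{\lambda}\,a+\frac{x-a}{\lambda}\,(a+\lambda)$ and on $[a+\lambda,b]$ as $\frac{b-x}{b-a-\lambda}\,(a+\lambda)+\frac{x-a-\lambda}{b-a-\lambda}\,b$, apply $s$-convexity of $|f'|$ to each, and integrate.

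The computation then rests on the two elementary evaluations $\int_0^L u^{s+1}\,du=L^{s+2}/(s+2)$ and the Beta integral $\int_0^L u(L-u)^s\,du=L^{s+2}/((s+1)(s+2))$, used with $L=\lambda$ and $L=b-a-\lambda$. These give
\[
\int_a^{a+\lambda}(x-a)|f'(x)|\,dx\le\frac{\lambda^2|f'(a)|}{(s+1)(s+2)}+\frac{\lambda^2|f'(a+\lambda)|}{s+2},
\]
\[
\int_{a+\lambda}^b(b-x)|f'(x)|\,dx\le\frac{(b-a-\lambda)^2|f'(b)|}{(s+1)(s+2)}+\frac{(b-a-\lambda)^2|f'(a+\lambda)|}{s+2},
\]
and summing them is exactly (\ref{eq2.3}). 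The degenerate cases $\lambda=0$ and $\lambda=b-a$ kill one of the two terms and are immediate (or follow by continuity in $\lambda$).

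Inequality (\ref{eq2.4}) follows the identical scheme applied to (\ref{eq2.2}): the triangle inequality plus $\int_a^x g\,dt\le x-a$ and $\int_x^b(1-g)\,dt\le b-x$ leave $\int_a^{b-\lambda}(x-a)|f'(x)|\,dx$ and $\int_{b-\lambda}^b(b-x)|f'(x)|\,dx$; now one interpolates $x$ affinely between $a$ and $b-\lambda$ on the first interval and between $b-\lambda$ and $b$ on the second (so the relevant lengths are $b-a-\lambda$ and $\lambda$ respectively), and the same two power integrals produce (\ref{eq2.4}) after summation. The only real care needed is the bookkeeping of the convex-combination coefficients --- checking they are nonnegative, sum to $1$, and refer to the correct endpoints on each subinterval --- together with the Beta-integral evaluation; the rest is the triangle inequality and the hypothesis $0\le g\le1$, so I expect no substantive obstacle.
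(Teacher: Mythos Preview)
Your proposal is correct and follows essentially the same route as the paper: triangle inequality applied to the representation (\ref{eq2.1}), the bounds $\int_a^x(1-g)\le x-a$ and $\int_x^b g\le b-x$ from $0\le g\le1$, $s$-convexity of $|f'|$ with the endpoints $a,a+\lambda$ (resp.\ $a+\lambda,b$), and the two elementary integrals $\int_0^L u^{s+1}\,du$ and $\int_0^L u(L-u)^s\,du$. The only cosmetic difference is that the paper applies the $s$-convexity bound before estimating the inner weight, whereas you do it in the opposite order; the resulting computations coincide.
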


\begin{proof}
Utilizing the\ triangle inequality on (\ref{eq2.1}), and since
$|f'|$ is $s$-convex, we have
\begin{align*}
&\left| {\int_a^{a + \lambda } {f\left( t \right)dt}  - \int_a^b
{f\left( t \right)g\left( t \right)dt} } \right|
\\
&\le \left| {\int_a^{a + \lambda } {\left( {\int_a^x {\left( {1 -
g\left( t \right)} \right)dt} } \right)f'\left( x \right)dx} }
\right|+ \left| {\int_{a + \lambda }^b {\left( {\int_x^b {g\left(
t \right)dt} } \right)f'\left( x \right)dx} } \right|
\\
&\le \int_a^{a + \lambda } { \left| {\int_a^x {\left( {1 - g\left(
t \right)} \right)dt} } \right|\left| {f'\left( x \right)}
\right|dx} + \int_{a + \lambda }^b {\left| {\int_x^b {g\left( t
\right)dt} } \right| \left| {f'\left( x \right)} \right| dx}
\end{align*}
\begin{align*}
&\le \int_a^{a + \lambda } { \left| {\int_a^x {\left( {1 - g\left(
t \right)} \right)dt} } \right|\left[ {\frac{\left( {x -
a}\right)^s}{\lambda^s} \left| {f'\left( {a+\lambda} \right)}
\right| + \frac{\left( {a+ \lambda - x}\right)^s}{\lambda^s}
\left| {f'\left( a \right)} \right|} \right]dx}
\\
&\qquad+ \int_{a + \lambda }^b {\left| {\int_x^b {g\left( t
\right)dt} } \right| \left[ {\frac{\left( {x - a -
\lambda}\right)^s}{\left( {b - a - \lambda} \right)^s} \left|
{f'\left( b \right)} \right| + \frac{\left( {b -
x}\right)^s}{\left( {b - a - \lambda} \right)^s} \left| {f'\left(
a +\lambda \right)} \right|} \right] dx}
\\
&\le  \frac{{\left| {f'\left( a+\lambda \right)}
\right|}}{{\lambda^s}} \int_a^{a + \lambda } { \left( {\int_a^x
{\left| {1 - g\left( t \right)} \right|dt}} \right) \left( {x - a}
\right)^s dx}
\\
&\qquad+\frac{{\left| {f'\left( a \right)} \right|}}{{\lambda^s}}
\int_a^{a + \lambda } { \left( {\int_a^x {\left| {1 - g\left( t
\right)} \right|dt}} \right) \left( {a + \lambda - x} \right)^s
dx}
\\
&\qquad+ \frac{{\left| {f'\left( b \right)} \right|}}{{\left( {b -
a - \lambda} \right)^s}} \int_{a + \lambda }^b {\left( {\int_x^b
{\left| {g\left( t \right)} \right|dt} } \right) \left( {x - a
-\lambda} \right)^s dx}
\\
&\qquad+ \frac{{\left| {f'\left( a + \lambda \right)}
\right|}}{{\left( {b - a - \lambda} \right)^s}} \int_{a + \lambda
}^b {\left( {\int_x^b {\left| {g\left( t \right)} \right|dt} }
\right)\left( {b - x} \right)^sdx}
\\
&\le \frac{{\left| {f'\left( a+\lambda \right)}
\right|}}{{\lambda^s}} \int_a^{a + \lambda } { \left( {x - a}
\right)^{s+1} dx} +\frac{{\left| {f'\left( a \right)}
\right|}}{{\lambda^s}} \int_a^{a + \lambda } { \left( {x - a}
\right) \left( {a + \lambda - x} \right)^s dx}
\\
&\qquad+ \frac{{\left| {f'\left( b \right)} \right|}}{{\left( {b -
a - \lambda} \right)^s}} \int_{a + \lambda }^b {\left( {b - x}
\right)\left( {x - a -\lambda} \right)^sdx} + \frac{{\left|
{f'\left( a + \lambda \right)} \right|}}{{\left( {b - a - \lambda}
\right)^s}} \int_{a + \lambda }^b {\left( {b - x} \right)^{s+1}
dx}
\\
&=\frac{1}{{\left( {s + 1} \right)\left( {s + 2} \right)}}\left[
{\lambda ^{2} \left| {f'\left( a \right)} \right| + \left( {b - a
- \lambda } \right)^{2} \left| {f'\left( b \right)} \right|}
\right]
\\
&\qquad+ \frac{1}{{s + 2}}\left[ {\lambda ^{ 2}  + \left( {b - a -
\lambda } \right)^{ 2} } \right]\left| {f'\left( {a + \lambda }
\right)} \right|
\end{align*}
which proves the first inequality in (\ref{eq2.3}). In similar way
and using (\ref{eq2.2}) we may deduce the desired inequality
(\ref{eq2.4}), and we shall omit the details.
\end{proof}

\begin{corollary} \label{cor1}
In (\ref{eq2.3}) if one chooses $s=1$ then
\begin{multline}
\label{eq2.5}\left| {\int_a^{a + \lambda } {f\left( t \right)dt -
\int_a^b {f\left( t \right)g\left( t \right)dt} } } \right|
\\
\le  \frac{1}{6}\lambda ^2\left| {f'\left( a \right)} \right|
+\frac{1}{3} \left[ {\lambda ^2+ \left( {b - a - \lambda }
\right)^2}\right]\left| {f'\left( a + \lambda \right)} \right| +
\frac{1}{6}\left( {b - a - \lambda } \right)^2 \left| {f'\left( b
\right)} \right|
\end{multline}
also, in (\ref{eq2.4}) if $s=1$, then
\begin{multline}
\label{eq2.6}\left| {\int_a^b {f\left( t \right)g\left( t
\right)dt}  - \int_{b - \lambda }^b {f\left( t \right)dt} }
\right|
\\
\le  \frac{1}{6}\lambda ^2\left| {f'\left( b \right)} \right|
+\frac{1}{3} \left[ {\lambda ^2+ \left( {b - a - \lambda }
\right)^2}\right]\left| {f'\left( b - \lambda \right)} \right| +
\frac{1}{6}\left( {b - a - \lambda } \right)^2 \left| {f'\left( a
\right)} \right|
\end{multline}
\end{corollary}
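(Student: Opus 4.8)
The plan is to specialize Theorem~\ref{thm.main1} to the value $s=1$. First observe that for $s=1$ the requirement ``$|f'|$ is $s$--convex on $[a,b]$'' is precisely the ordinary convexity of $|f'|$, as recorded just after~(\ref{2}); hence the hypotheses of Theorem~\ref{thm.main1} with $s=1$ are already in force, and the bounds (\ref{eq2.3}) and (\ref{eq2.4}) hold without modification.

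Next I would substitute $s=1$ into the two constants occurring in (\ref{eq2.3}): since $\frac{1}{(s+1)(s+2)}=\frac16$ and $\frac{1}{s+2}=\frac13$, inequality (\ref{eq2.3}) becomes
\[
\left| \int_a^{a+\lambda} f(t)\,dt - \int_a^b f(t)g(t)\,dt \right| \le \frac{1}{6}\bigl[\lambda^2 |f'(a)| + (b-a-\lambda)^2 |f'(b)|\bigr] + \frac{1}{3}\bigl[\lambda^2 + (b-a-\lambda)^2\bigr]|f'(a+\lambda)|.
\]
Distributing the factor $\tfrac16$ over the first bracket and reordering the three resulting summands yields exactly (\ref{eq2.5}). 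Carrying out the identical substitution in (\ref{eq2.4}) — where now $|f'(b)|$ carries the weight $\lambda^2$, $|f'(a)|$ carries $(b-a-\lambda)^2$, and $|f'(a+\lambda)|$ is replaced by $|f'(b-\lambda)|$ — produces (\ref{eq2.6}).

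There is no genuine obstacle here: the corollary is a straight specialization and the work is purely algebraic rearrangement of inequalities already established. The single point meriting a second check is bookkeeping: that the $f'(a)\leftrightarrow f'(b)$ asymmetry between (\ref{eq2.3}) and (\ref{eq2.4}), together with the accompanying switch $f'(a+\lambda)\to f'(b-\lambda)$, is transcribed faithfully into (\ref{eq2.5}) and (\ref{eq2.6}).
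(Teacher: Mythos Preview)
Your proposal is correct and matches the paper's treatment: the corollary is stated without proof there, being an immediate specialization of Theorem~\ref{thm.main1} to $s=1$, and your substitution $\frac{1}{(s+1)(s+2)}=\frac16$, $\frac{1}{s+2}=\frac13$ together with the bookkeeping check is exactly what is required.
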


\begin{remark}
In the inequalities (\ref{eq2.3}) and (\ref{eq2.4}), choose
$\lambda = 0$, then we have
\begin{multline}
\label{eq2.7}\left| {\int_a^b {f\left( t \right)g\left( t
\right)dt}  } \right|
\\
\le  \frac{\left( {b - a} \right)^{2}}{{\left( {s + 1}
\right)\left( {s + 2} \right)}}\min\left\{ {\left( {s + 1}
\right)\left| {f'\left( a \right)} \right|  + \left| {f'\left( b
\right)} \right| ,\left| {f'\left( a \right)} \right| + \left( {s
+ 1} \right)\left| {f'\left( b \right)} \right| } \right\}.
\end{multline}
\end{remark}

Another approach leads to the following result:
\begin{theorem}
\label{thm.main2}Let $f,g: [a,b]\subset \mathbb{R}^{+} \to
\mathbb{R}$ be integrable such that $0 \le g(t) \le 1$, for all $t
\in [a,b]$ such that $\int_a^b {g\left( t \right)f'\left( t
\right)dx}$ exists. If $f$ is absolutely continuous on $[a,b]$
with $|f'|$ is $s$-convex on $[a,b]$, for some fixed $s\in (0,1]$
then we have
\begin{multline}
\left| {\int_a^{a + \lambda } {f\left( t \right)dt}  - \int_a^b
{f\left( t \right)g\left( t \right)dt} } \right|
\\
\le \frac{1}{{s + 1}}\left[ {\int_{a + \lambda }^b {g\left( t
\right)dt}} \right] \cdot\left[ {\lambda \left| {f'\left( a
\right)} \right| + \left( {b - a} \right)\left| {f'\left( {a +
\lambda } \right)} \right| + \left( {b - a - \lambda }
\right)\left| {f'\left( b \right)} \right|} \right]\label{eq2.8}
\\
\le \frac{\left( {b - a - \lambda } \right)}{{s + 1}}\left[
{\lambda \left| {f'\left( a \right)} \right| + \left( {b - a}
\right)\left| {f'\left( {a + \lambda } \right)} \right| + \left(
{b - a - \lambda } \right)\left| {f'\left( b \right)} \right|}
\right],
\end{multline}
and
\begin{multline}
\left| {\int_a^b {f\left( t \right)g\left( t \right)dt}  - \int_{b
- \lambda }^b {f\left( t \right)dt} } \right|
\label{eq2.9}\\
\le \frac{1}{{s + 1}}\left[ {\int_{b - \lambda}^b {g\left( t
\right)dt}} \right] \cdot\left[ {\left( {b - a - \lambda }
\right)\left| {f'\left( a \right)} \right| + \left( {b - a}
\right)\left| {f'\left( {b - \lambda } \right)} \right| + \lambda
\left| {f'\left( b \right)} \right|} \right]
\\
\le \frac{\lambda}{{s + 1}} \left[ {\left( {b - a - \lambda }
\right)\left| {f'\left( a \right)} \right| + \left( {b - a}
\right)\left| {f'\left( {b - \lambda } \right)} \right| + \lambda
\left| {f'\left( b \right)} \right|} \right],
\end{multline}
where $\lambda : = \int_a^b {g\left( t \right)dt}$.
\end{theorem}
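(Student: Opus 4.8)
The plan is to run the same two-step scheme as in Theorem \ref{thm.main1}, but to extract a single multiplicative constant \emph{before} invoking $s$-convexity, rather than carrying the weights $(x-a)^s$ and $(b-x)^s$ inside the integrals. Start from identity (\ref{eq2.1}) of Lemma \ref{lemma} and apply the triangle inequality, so that
\[
\left|\int_a^{a+\lambda}f(t)\,dt-\int_a^b f(t)g(t)\,dt\right|\le\int_a^{a+\lambda}\left(\int_a^x\bigl(1-g(t)\bigr)dt\right)|f'(x)|\,dx+\int_{a+\lambda}^b\left(\int_x^b g(t)\,dt\right)|f'(x)|\,dx.
\]
The observation that makes this approach work is that both inner integrals are dominated by one and the same number. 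Since $1-g\ge0$, the function $x\mapsto\int_a^x(1-g(t))\,dt$ is nondecreasing, hence on $[a,a+\lambda]$ it is at most $\int_a^{a+\lambda}(1-g(t))\,dt$; since $g\ge0$, the function $x\mapsto\int_x^b g(t)\,dt$ is nonincreasing, hence on $[a+\lambda,b]$ it is at most $\int_{a+\lambda}^b g(t)\,dt$; and from $\int_a^b g(t)\,dt=\lambda$ one gets the identity
\[
\int_a^{a+\lambda}\bigl(1-g(t)\bigr)dt=\lambda-\int_a^{a+\lambda}g(t)\,dt=\int_{a+\lambda}^b g(t)\,dt.
\]
Calling this common value $\mu$, the right-hand side above is at most $\mu\bigl(\int_a^{a+\lambda}|f'(x)|\,dx+\int_{a+\lambda}^b|f'(x)|\,dx\bigr)=\mu\int_a^b|f'(x)|\,dx$.

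It then remains to estimate $\int_a^b|f'(x)|\,dx$ using the $s$-convexity of $|f'|$, which I would do by splitting the integral at the point $a+\lambda$. On $[a,a+\lambda]$, writing $x=\frac{a+\lambda-x}{\lambda}\,a+\frac{x-a}{\lambda}\,(a+\lambda)$ and using $s$-convexity gives $|f'(x)|\le\frac{(a+\lambda-x)^s}{\lambda^s}|f'(a)|+\frac{(x-a)^s}{\lambda^s}|f'(a+\lambda)|$, and since $\int_a^{a+\lambda}(x-a)^s\,dx=\int_a^{a+\lambda}(a+\lambda-x)^s\,dx=\frac{\lambda^{s+1}}{s+1}$ we obtain $\int_a^{a+\lambda}|f'(x)|\,dx\le\frac{\lambda}{s+1}\bigl(|f'(a)|+|f'(a+\lambda)|\bigr)$. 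The identical argument on $[a+\lambda,b]$ gives $\int_{a+\lambda}^b|f'(x)|\,dx\le\frac{b-a-\lambda}{s+1}\bigl(|f'(a+\lambda)|+|f'(b)|\bigr)$. Adding the two, and noting that the coefficient of $|f'(a+\lambda)|$ is $\lambda+(b-a-\lambda)=b-a$, yields
\[
\int_a^b|f'(x)|\,dx\le\frac{1}{s+1}\bigl[\lambda|f'(a)|+(b-a)|f'(a+\lambda)|+(b-a-\lambda)|f'(b)|\bigr].
\]
Multiplying this by $\mu=\int_{a+\lambda}^b g(t)\,dt$ gives the first inequality in (\ref{eq2.8}), while the second inequality in (\ref{eq2.8}) follows at once from $0\le g(t)\le1$, which forces $\int_{a+\lambda}^b g(t)\,dt\le b-a-\lambda$.

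For (\ref{eq2.9}) I would carry out the mirror-image argument starting from identity (\ref{eq2.2}): the inner integrals $\int_a^x g(t)\,dt$ for $x\in[a,b-\lambda]$ and $\int_x^b(1-g(t))\,dt$ for $x\in[b-\lambda,b]$ are again bounded, by monotonicity together with $\int_a^{b-\lambda}g(t)\,dt=\lambda-\int_{b-\lambda}^b g(t)\,dt=\int_{b-\lambda}^b(1-g(t))\,dt$, by one common constant; factoring it out leaves $\int_a^b|f'(x)|\,dx$, which after splitting at $b-\lambda$ and applying $s$-convexity on each piece is at most $\frac{1}{s+1}\bigl[(b-a-\lambda)|f'(a)|+(b-a)|f'(b-\lambda)|+\lambda|f'(b)|\bigr]$; and the last inequality in (\ref{eq2.9}) comes from $\int_{b-\lambda}^b g(t)\,dt\le\lambda$. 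I do not anticipate a serious obstacle — everything reduces to the elementary integrals $\int(x-c)^s\,dx$ — so the only point demanding care is the bookkeeping identity $\int_a^{a+\lambda}(1-g)=\int_{a+\lambda}^b g$ (and its mirror image), which is exactly what legitimizes pulling out the single factor $\int_{a+\lambda}^b g(t)\,dt$ and is the essential difference between this proof and that of Theorem \ref{thm.main1}.
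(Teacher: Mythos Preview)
Your argument is correct and is essentially the paper's own proof: pull out the supremum of each inner integral, apply the right-hand Hermite--Hadamard inequality (\ref{2}) for $s$-convex $|f'|$ on $[a,a+\lambda]$ and on $[a+\lambda,b]$, and use the identity $\int_a^{a+\lambda}(1-g)=\int_{a+\lambda}^b g$ to merge the two factors into one. The only differences are cosmetic --- the paper cites (\ref{2}) while you rederive it inline, and the paper writes the common factor as a $\max$ before silently replacing it by $\int_{a+\lambda}^b g$, whereas you justify that equality explicitly. One small caution on (\ref{eq2.9}): your mirror argument produces the factor $\int_a^{b-\lambda}g=\int_{b-\lambda}^b(1-g)$, not $\int_{b-\lambda}^b g$ as printed in the statement, so your final sentence should invoke $\int_a^{b-\lambda}g\le\lambda$ rather than $\int_{b-\lambda}^b g\le\lambda$; the printed factor appears to be a misprint, and the last inequality in (\ref{eq2.9}) is unaffected.
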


\begin{proof}
From Lemma \ref{lemma}, we may write
\begin{align*}
&\left| {\int_a^{a + \lambda } {f\left( t \right)dt}  - \int_a^b
{f\left( t \right)g\left( t \right)dt} } \right|
\\
&\le \mathop {\sup }\limits_{x \in \left[ {a,a + \lambda }
\right]} \left[ {\int_a^x {\left( {1 - g\left( t \right)}
\right)dt} } \right] \cdot \int_a^{a + \lambda } \left|
f'\left(x\right) \right|dx + \mathop {\sup }\limits_{x \in \left[
{a + \lambda ,b} \right]} \left[ {\int_x^b {g\left( t \right)dt} }
\right] \cdot \int_{a + \lambda }^b
\left|f'\left(x\right)\right|dx.
\end{align*}
Since $|f'|$ is $s$--convex on $[a,b]$, then by (\ref{2}) we have
\begin{align*}
\int_a^{a + \lambda}  \left|f'\left(x\right)\right|dx \le
\lambda\cdot \frac{{\left| {f'\left( a \right)} \right| + \left|
{f'\left( {a + \lambda } \right)} \right|}}{{s + 1}},
\end{align*}
and
\begin{align*}
\int_{a + \lambda }^b \left|f'\left(x\right)\right|dx \le \left(
{b - a - \lambda } \right)\cdot\frac{{\left| {f'\left( {a +
\lambda } \right)} \right| + \left| {f'\left( b \right)}
\right|}}{{s + 1}}.
\end{align*}
Therefore, we have
\begin{align*}
&\left| {\int_a^{a + \lambda } {f\left( t \right)dt}  - \int_a^b
{f\left( t \right)g\left( t \right)dt} } \right|
\\
&\le\lambda\cdot \frac{{\left| {f'\left( a \right)} \right| +
\left| {f'\left( {a + \lambda } \right)} \right|}}{{s + 1}} \cdot
\int_a^{a + \lambda } {\left( {1 - g\left( t \right)} \right)dt}
\\
&\qquad+ \left( {b - a - \lambda } \right)\cdot\frac{{\left|
{f'\left( {a + \lambda } \right)} \right| + \left| {f'\left( b
\right)} \right|}}{{s + 1}}
 \cdot \int_{a + \lambda }^b {g\left( t \right)dt}
\\
&\le \max \left\{ {\int_a^{a + \lambda } {\left( {1 - g\left( t
\right)} \right)dt} ,\int_{a + \lambda }^b {g\left( t \right)dt} }
\right\} \cdot \left[ {\lambda \frac{{\left| {f'\left( a \right)}
\right| + \left| {f'\left( {a + \lambda } \right)} \right|}}{{s +
1}}}\right.
\\
&\qquad\left. {+ \left( {b - a - \lambda } \right)\frac{{\left|
{f'\left( {a + \lambda } \right)} \right| + \left| {f'\left( b
\right)} \right|}}{{s + 1}}}\right]
\\
&= \frac{1}{{s + 1}}\left[ {\int_{a + \lambda }^b {g\left( t
\right)dt}} \right] \cdot\left[ {\lambda \left| {f'\left( a
\right)} \right| + \left( {b - a} \right)\left| {f'\left( {a +
\lambda } \right)} \right| + \left( {b - a - \lambda }
\right)\left| {f'\left( b \right)} \right|} \right],
\end{align*}
which proves the first inequality in (\ref{eq2.8}). The second
inequality in (\ref{eq2.8}) follows directly, since $0 \le g(t)
\le 1$ for all $t \in [a,b]$, then
\begin{align*}
0 \le \int_{a + \lambda }^b {g\left( t \right)dt} &\le \left( {b -
a - \lambda } \right).
\end{align*}
The inequalities in (\ref{eq2.9}) may be proved in the same way
using the identity (\ref{eq2.2}), we shall omit the details.
\end{proof}

\subsection{Inequalities involving $s$-concavity}
\begin{theorem}
\label{thm5}Let $f,g: [a,b]\subset \mathbb{R}^{+} \to \mathbb{R}$
be integrable such that $0 \le g(t) \le 1$, for all $t \in [a,b]$
such that $\int_a^b {g\left( t \right)f'\left( t \right)dx}$
exists. If $f$ is absolutely continuous on $[a,b]$ with $|f'|$ is
$s$-concave on $[a,b]$, for some fixed $s\in (0,1]$ then we have
\begin{align}
&\left| {\int_a^{a + \lambda } {f\left( t \right)dt}  - \int_a^b
{f\left( t \right)g\left( t \right)dt} } \right|
\nonumber\\
&\le 2^{s-1}\left[ \int_{a + \lambda }^b {g\left( t \right)dt}
\right]\cdot \left[ {\lambda\left| {f'\left( {a + \frac{\lambda
}{2}} \right)} \right| + \left( {b - a - \lambda} \right)\left|
{f'\left( {\frac{{a + b + \lambda }}{2}} \right)} \right|}
\right]. \label{eq2.10}
\\
&\le 2^{s-1}\left( {b - a - \lambda} \right)\cdot \left[
{\lambda\left| {f'\left( {a + \frac{\lambda }{2}} \right)} \right|
+ \left( {b - a - \lambda} \right)\left| {f'\left( {\frac{{a + b +
\lambda }}{2}} \right)} \right|} \right] \nonumber
\end{align}
and
\begin{align}
&\left| {\int_a^b {f\left( t \right)g\left( t \right)dt}-\int_{b -
\lambda }^b {f\left( t \right)dt}} \right|
\nonumber\\
&\le 2^{s-1}\left[ \int_{b - \lambda }^b {g\left( t \right)dt}
\right]\cdot \left[ { \left( {b - a - \lambda} \right)\left|
{f'\left( {\frac{{a + b - \lambda }}{2}} \right)} \right| +
\lambda\left| {f'\left( {b - \frac{\lambda }{2}} \right)} \right|
} \right]. \label{eq2.11}
\\
&\le \lambda 2^{s-1}\cdot \left[ { \left( {b - a - \lambda}
\right)\left| {f'\left( {\frac{{a + b - \lambda }}{2}} \right)}
\right| + \lambda\left| {f'\left( {b - \frac{\lambda }{2}}
\right)} \right| } \right], \nonumber
\end{align}
where $\lambda : = \int_a^b {g\left( t \right)dt}$.
\end{theorem}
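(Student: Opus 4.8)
The plan is to run the argument of Theorem~\ref{thm.main2} essentially unchanged, replacing the upper Hermite--Hadamard estimate for $|f'|$ by the reversed lower one. Precisely, since $|f'|$ is $s$-concave, $-|f'|$ is $s$-convex, so applying the first inequality in (\ref{2}) to $-|f'|$ on an arbitrary subinterval $[c,d]\subseteq[a,b]$ and reversing gives
\[
\int_c^d\left|f'\left(x\right)\right|dx\le 2^{s-1}\left(d-c\right)\left|f'\left(\frac{c+d}{2}\right)\right|.
\]
This is the only genuinely new ingredient; everything else is the bookkeeping already present in Theorem~\ref{thm.main2}.

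First I would invoke the representation (\ref{eq2.1}) of Lemma~\ref{lemma} together with the triangle inequality to obtain
\[
\left|\int_a^{a+\lambda}f\left(t\right)dt-\int_a^b f\left(t\right)g\left(t\right)dt\right|\le K_1\int_a^{a+\lambda}\left|f'\left(x\right)\right|dx+K_2\int_{a+\lambda}^b\left|f'\left(x\right)\right|dx,
\]
where $K_1=\sup_{x\in[a,a+\lambda]}\int_a^x\left(1-g\left(t\right)\right)dt$ and $K_2=\sup_{x\in[a+\lambda,b]}\int_x^b g\left(t\right)dt$. Because $1-g\ge 0$, the map $x\mapsto\int_a^x\left(1-g\left(t\right)\right)dt$ is nondecreasing, so $K_1=\int_a^{a+\lambda}\left(1-g\left(t\right)\right)dt$; because $g\ge 0$, the map $x\mapsto\int_x^b g\left(t\right)dt$ is nonincreasing, so $K_2=\int_{a+\lambda}^b g\left(t\right)dt$. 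Finally, using $\lambda=\int_a^b g\left(t\right)dt$ one gets the identity $K_1=\lambda-\int_a^{a+\lambda}g\left(t\right)dt=\int_{a+\lambda}^b g\left(t\right)dt=K_2$, so both coefficients coincide with $\int_{a+\lambda}^b g\left(t\right)dt$.

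Then I would feed the reversed Hermite--Hadamard estimate above into the two integrals of $|f'|$, once on $[a,a+\lambda]$ (midpoint $a+\frac{\lambda}{2}$) and once on $[a+\lambda,b]$ (midpoint $\frac{a+b+\lambda}{2}$), which turns the right-hand side into
\[
\left[\int_{a+\lambda}^b g\left(t\right)dt\right]2^{s-1}\left[\lambda\left|f'\left(a+\frac{\lambda}{2}\right)\right|+\left(b-a-\lambda\right)\left|f'\left(\frac{a+b+\lambda}{2}\right)\right|\right],
\]
i.e.\ the first inequality in (\ref{eq2.10}); the second then follows from $0\le\int_{a+\lambda}^b g\left(t\right)dt\le b-a-\lambda$, which holds because $0\le g\le 1$. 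The chain (\ref{eq2.11}) is obtained by the identical three steps starting from the representation (\ref{eq2.2}): the relevant kernels are now $x\mapsto\int_a^x g\left(t\right)dt$ (nondecreasing) and $x\mapsto\int_x^b\left(1-g\left(t\right)\right)dt$ (nonincreasing), their suprema again merge into one common coefficient via $\lambda=\int_a^b g\left(t\right)dt$, and the reversed midpoint estimate is applied on $[a,b-\lambda]$ and $[b-\lambda,b]$ (midpoints $\frac{a+b-\lambda}{2}$ and $b-\frac{\lambda}{2}$). I do not anticipate a real obstacle here; the two points that need a touch of care are orienting the reversed Hermite--Hadamard inequality correctly for $s$-concave functions and the short manipulation with $\lambda=\int_a^b g\left(t\right)dt$ that collapses the two supremum-kernels into a single common factor.
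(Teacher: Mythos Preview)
Your proposal is correct and follows essentially the same route as the paper: apply the representation (\ref{eq2.1}), pull out the suprema of the two kernels, use the reversed (left) Hermite--Hadamard inequality on $[a,a+\lambda]$ and $[a+\lambda,b]$ coming from $s$-concavity of $|f'|$, and then collapse the two kernel-suprema into the single factor $\int_{a+\lambda}^b g(t)\,dt$ via $\lambda=\int_a^b g(t)\,dt$. Your write-up is in fact a bit more explicit than the paper's (you justify why the suprema are attained at the endpoints and why $K_1=K_2$, whereas the paper passes through a $\max$ and then silently uses the same identity).
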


\begin{proof}
Utilizing the triangle inequality on (\ref{eq2.1}), and since
$|f'|$ is $s$--concave on $[a,b]$ then by (\ref{2}) we may state
\begin{align*}
&\left| {\int_a^{a + \lambda } {f\left( t \right)dt}  - \int_a^b
{f\left( t \right)g\left( t \right)dt} } \right|
\\
&\le \mathop {\sup }\limits_{x \in \left[ {a,a + \lambda }
\right]} \left[ {\int_a^x {\left( {1 - g\left( t \right)}
\right)dt} } \right] \cdot \int_a^{a + \lambda } \left|
f'\left(x\right) \right|dx + \mathop {\sup }\limits_{x \in \left[
{a + \lambda ,b} \right]} \left[ {\int_x^b {g\left( t \right)dt} }
\right] \cdot \int_{a + \lambda }^b
\left|f'\left(x\right)\right|dx
\\
&= 2^{s-1}\lambda\left| {f'\left( {a + \frac{\lambda }{2}}
\right)} \right|\cdot \int_a^{a + \lambda } {\left( {1 - g\left( t
\right)} \right)dt} + 2^{s-1} \left( {b - a - \lambda } \right)
\left| {f'\left( {\frac{{a + b + \lambda }}{2}} \right)} \right|
 \cdot \int_{a + \lambda }^b {g\left( t \right)dt}
\\
&= 2^{s-1}\left[ \int_{a + \lambda }^b {g\left( t \right)dt}
\right]\cdot \left[ {\lambda\left| {f'\left( {a + \frac{\lambda
}{2}} \right)} \right| + \left( {b - a - \lambda} \right)\left|
{f'\left( {\frac{{a + b + \lambda }}{2}} \right)} \right|}
\right].
\end{align*}
which proves the first inequality in (\ref{eq2.10}). The second
inequality in (\ref{eq2.10}) follows directly, since $0 \le g(t)
\le 1$ for all $t \in [a,b]$, then
\begin{align*}
0 \le \int_{a + \lambda }^b {g\left( t \right)dt} &\le \left( {b -
a - \lambda } \right).
\end{align*}
The inequalities in (\ref{eq2.11}) may be proved in the same way
using the identity (\ref{eq2.2}), we shall omit the details.
\end{proof}

Another result is incorporated in the following theorem:
\begin{theorem}
\label{thm.main3}Let $f,g: [a,b]\subset \mathbb{R}^{+} \to
\mathbb{R}$ be integrable such that $0 \le g(t) \le 1$, for all $t
\in [a,b]$ such that $\int_a^b {g\left( t \right)f'\left( t
\right)dx}$ exists. If $f$ is absolutely continuous on $[a,b]$
with $|f'|^q$ is $s$-concave on $[a,b]$, for some fixed $s\in
(0,1]$ and $q>1$, then we have
\begin{multline}
\label{eq2.12}\left| {\int_a^{a + \lambda } {f\left( t \right)dt -
\int_a^b {f\left( t \right)g\left( t \right)dt} } } \right|
\\
\le 2^{\left( {s - 1} \right)/q} \left[ { \lambda^2 \left|
{f'\left( {a + \frac{\lambda }{2}} \right)} \right|  + \left( {b -
a - \lambda } \right)^2 \left| {f'\left( {\frac{{a + b + \lambda
}}{2}} \right)} \right| } \right],
\end{multline}
and
\begin{multline}
\label{eq2.13}\left| {\int_a^b {f\left( t \right)g\left( t
\right)dt} - \int_{b - \lambda }^b {f\left( t \right)dt } }
\right|
\\
\le 2^{\left( {s - 1} \right)/q} \left[ { \left( {b - a - \lambda
} \right)^2 \left| {f'\left( {b - \frac{\lambda }{2}} \right)}
\right|  +  \lambda^2\left| {f'\left( {\frac{{a + b - \lambda
}}{2}} \right)} \right|  } \right],
\end{multline}
where $\lambda : = \int_a^b {g\left( t \right)dt}$.
\end{theorem}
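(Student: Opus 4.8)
The plan is to follow the pattern of the proofs of Theorems \ref{thm.main2} and \ref{thm5}, inserting a single H\"older step to accommodate the exponent $q$. Starting from the representation (\ref{eq2.1}) and applying the triangle inequality, I would first bound each inner integral by its supremum over the relevant subinterval, using that $\int_a^x\left(1-g(t)\right)dt\le x-a\le\lambda$ for $x\in[a,a+\lambda]$ and $\int_x^b g(t)\,dt\le b-x\le b-a-\lambda$ for $x\in[a+\lambda,b]$. This gives
\[
\left|\int_a^{a+\lambda}f(t)\,dt-\int_a^b f(t)g(t)\,dt\right|\le\lambda\int_a^{a+\lambda}\left|f'(x)\right|dx+\left(b-a-\lambda\right)\int_{a+\lambda}^b\left|f'(x)\right|dx .
\]

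Next I would apply H\"older's inequality (equivalently, the power-mean inequality) with exponents $p=q/(q-1)$ and $q$ to each of the two integrals of $|f'|$, obtaining $\int_a^{a+\lambda}|f'(x)|\,dx\le\lambda^{1/p}\bigl(\int_a^{a+\lambda}|f'(x)|^q\,dx\bigr)^{1/q}$ and the analogous bound on $[a+\lambda,b]$ with $\lambda^{1/p}$ replaced by $(b-a-\lambda)^{1/p}$. Since $|f'|^q$ is $s$-concave on $[a,b]\subset\mathbb{R}^+$, the reversed form of the second inequality in (\ref{2}) (the Hermite--Hadamard upper bound for $s$-concave functions) applied on $[a,a+\lambda]$ and on $[a+\lambda,b]$ yields
\[
\int_a^{a+\lambda}\left|f'(x)\right|^q dx\le 2^{s-1}\lambda\left|f'\!\left(a+\tfrac{\lambda}{2}\right)\right|^q,\qquad \int_{a+\lambda}^b\left|f'(x)\right|^q dx\le 2^{s-1}\left(b-a-\lambda\right)\left|f'\!\left(\tfrac{a+b+\lambda}{2}\right)\right|^q ,
\]
the arguments being the midpoints of the two subintervals. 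Substituting these and using $\lambda^{1/p}\cdot\lambda^{1/q}=\lambda$, $(b-a-\lambda)^{1/p}\cdot(b-a-\lambda)^{1/q}=b-a-\lambda$, and $(2^{s-1})^{1/q}=2^{(s-1)/q}$ gives exactly (\ref{eq2.12}). Inequality (\ref{eq2.13}) is obtained in precisely the same way from the representation (\ref{eq2.2}), with midpoints $b-\tfrac{\lambda}{2}$ and $\tfrac{a+b-\lambda}{2}$.

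There is no deep obstacle here; the proof is a routine combination of the identities in Lemma \ref{lemma}, the crude sup-bounds on the kernels, H\"older's inequality, and (\ref{2}). The only points requiring care are bookkeeping: correctly identifying the midpoints, using the \emph{reversed} direction of (\ref{2}) appropriate to $s$-concavity, and treating the degenerate cases $\lambda=0$ and $\lambda=b-a$ separately (where one subinterval collapses and the corresponding term simply vanishes, so the stated inequalities hold trivially).
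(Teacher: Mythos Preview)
Your proposal is correct and follows essentially the same route as the paper: Lemma~\ref{lemma}, triangle inequality, the crude bounds $\int_a^x(1-g)\le x-a$ and $\int_x^bg\le b-x$, H\"older with exponents $p,q$, and the reversed Hermite--Hadamard inequality for $s$-concave functions. The only cosmetic difference is the order of two steps: the paper applies H\"older directly to the product $\bigl|\int_a^x(1-g)\bigr|\cdot|f'(x)|$ and then bounds the kernel inside the resulting $L^p$-norm, whereas you first pull out the sup of the kernel and then apply H\"older to $\int|f'|\cdot 1$; both lead to the same factor $\lambda^{1+1/p}\cdot\lambda^{1/q}=\lambda^2$ (and similarly $(b-a-\lambda)^2$).
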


\begin{proof}
From Lemma \ref{lemma} and using the H\"{o}lder inequality for $q
> 1$, and $p = \frac{q}{q-1}$, we obtain
\begin{align}
&\left| {\int_a^{a + \lambda } {f\left( t \right)dt}  - \int_a^b
{f\left( t \right)g\left( t \right)dt} } \right|
\nonumber\\
&\le \int_a^{a + \lambda } { \left| {\int_a^x {\left( {1 - g\left(
t \right)} \right)dt} } \right|\left| {f'\left( x \right)}
\right|dx} + \int_{a + \lambda }^b {\left| {\int_x^b {g\left( t
\right)dt} } \right| \left| {f'\left( x \right)} \right| dx}
\nonumber\\
&\le  \left( {\int_a^{a + \lambda } { \left| {\int_a^x {\left( {1
- g\left( t \right)} \right)dt} } \right|^p dx}} \right)^{1/p}
\left( {\int_a^{a + \lambda } {\left| {f'\left( x \right)}
\right|^q dx}} \right)^{1/q}
\label{eq2.14}\\
&\qquad+ \left( {\int_{a + \lambda }^b {\left| {\int_x^b {g\left(
t \right)dt} } \right|^p dx}} \right)^{1/p} \left( {\int_{a +
\lambda }^b { \left| {f'\left( x \right)} \right|^q dx}}
\right)^{1/q}:=M,\nonumber
\end{align}
where $p$ is the conjugate of $q$.

By the inequality (\ref{2}), we have
\begin{align*}
\int_a^{a + \lambda } {\left| {f'\left( x \right)} \right|^q dx}
\le 2^{s - 1} \lambda \left| {f'\left( {a + \frac{1}{2}\lambda }
\right)} \right|^q,
\end{align*}
and
\begin{align*}
\int_{a + \lambda }^b {\left| {f'\left( x \right)} \right|^q dx}
\le 2^{s - 1} \left( {b - a - \lambda} \right) \left| {f'\left(
{\frac{{a + b + \lambda }}{2}} \right)} \right|^q,
\end{align*}
which gives by (\ref{eq2.14})
\begin{align*}
M &\le 2^{\left( {s - 1} \right)/q} \lambda^{1/q}\left| {f'\left(
{a + \frac{\lambda }{2}} \right)} \right| \left( {\int_a^{a +
\lambda } { \left( {x - a} \right)^pdx} } \right)^{1/p}
\\
&\qquad+ 2^{\left( {s - 1} \right)/q} \left( {b - a - \lambda}
\right)^{1/q}\left| {f'\left( {\frac{{a + b + \lambda }}{2}}
\right)} \right| \left( {\int_{a + \lambda }^b {\left( {b - x }
\right)^pdx}} \right)^{1/p}
\\
&=  2^{\left( {s - 1} \right)/q} \left[ { \lambda ^{1 +
{\textstyle{1 \over p}} + {\textstyle{1 \over q}}} \left|
{f'\left( {a + \frac{\lambda }{2}} \right)} \right|  + \left( {b -
a - \lambda } \right)^{1 + {\textstyle{1 \over p}} + {\textstyle{1
\over q}}} \left| {f'\left( {\frac{{a + b + \lambda }}{2}}
\right)} \right| } \right]
\\
&=2^{\left( {s - 1} \right)/q} \left[ { \lambda^2 \left| {f'\left(
{a + \frac{\lambda }{2}} \right)} \right|  + \left( {b - a -
\lambda } \right)^2 \left| {f'\left( {\frac{{a + b + \lambda
}}{2}} \right)} \right| } \right],
\end{align*}
giving the second inequality in (\ref{eq2.12}).

The inequalities in (\ref{eq2.13}) may be proved in the same way
using the identity (\ref{eq2.2}), we shall omit the details.
\end{proof}

\begin{remark}
The interested reader may obtain several inequalities for
log-convex, quasi-convex, $r$-convex and $h$-convex functions by
replacing the condition on $|f'|$.
\end{remark}


\begin{thebibliography}{9}
\setlength{\itemsep}{5pt}

\bibitem{RefA}
M.W. Alomari, S. Hussian and Z. Liu, Some Steffensen's type
inequalities, submitted.

\bibitem{RefB}
M.W. Alomari, M. Darus and U.S. Kirmaci, Some inequalities of
Hermite--Hadamard type for $s$--convex functions, \textit{Acta
Mathematica Scientia}, 31B(4):1643--1652, (2011).


\bibitem{RefC}
A. Ap\'{e}ry, Une in\'{e}galit\'{e} surles fonctions de variable
r\'{e}elle, Attj del Quarto Congresso dell'Unione Matematica
Itliana 1951, 2, 3--4 (1953).

\bibitem{RefD} S.S. Dragomir and S. Fitzpatrick, The Hadamard's inequality for
$s$-convex functions in the second sense, \emph{Demonstratio
Math.}, 32 (4) (1999), 687-696.

\bibitem{RefE}
T. Hayashi, On curves with monotonous curvature, T\^{o}hoku Math.
J., 15, 236--239 (1919).


\bibitem{RefF} H. Hudzik and L. Maligranda, Some remarks on $s$-convex
functions, \emph{Aequationes Math.}, 48 (1994), 100-111.

\bibitem{RefG} {U.S. Kirmaci et al., Hadamard-type inequalities for $s$-convex
functions, \emph{Appl. Math. Comp.},} {193} (2007), 26--35.

\bibitem{RefH}
B. Meidell, Note sur quelques in\'{e}galit\'{e}s et formules
d'approximation, \textit{Skand. Aktuarietids.}, 10, 180--198,
(1918).

\bibitem{RefI}
D.S. Mitrinovi\'{c}, J.E. Pe\v{c}ari\'{c} and A.M. Fink, Classical
and new inequalities in analysis, Kluwer, Dordrecht (1993).


\bibitem{RefJ}
A. Ostrowski, Aufgabensammlung zur infinitesimalrechnung,
Basel--Stutgar, 1, (1964).


\bibitem{RefK}
J.F. Steffensen, On certain inequalities between mean values and
their application to actuarial problems, \textit{Skand.
Aktuarietids.},  82--97, (1918).

\end{thebibliography}
\end{document}